\newtheorem{thm}{Theorem}[section]
\newtheorem{lemma}[thm]{Lemma}
\newtheorem{corollary}[thm]{Corollary}
\numberwithin{equation}{section}
\journal{Physics Letters A}
\newcommand{\R}{\mathbb{R}}
\begin{document}


\begin{frontmatter}

\title{Stabilization with target oriented control for higher order difference equations}

\author[label1]{E. Braverman}
\author[label2]{D. Franco}
\address[label1]{Department of Mathematics and Statistics, University of
Calgary, 2500 University Drive N.W., Calgary, AB T2N 1N4, Canada}
\address[label2]{ Departamento de Matem\'atica Aplicada, E.T.S.I. 
Industriales, Universidad Nacional de Educaci\'on a Distancia~(UNED), c/~Juan del Rosal~12, 28040, Madrid, Spain}

\begin{abstract}
For a physical or biological model whose dynamics is described by a higher order difference equation
$u_{n+1}=f(u_n,u_{n-1}, \dots, u_{n-k+1})$, we propose a version of a target oriented control
$u_{n+1}=cT+(1-c)f(u_n,u_{n-1}, \dots, u_{n-k+1})$, with $T\ge 0$, $c\in [0,1)$.
In ecological systems, the method incorporates harvesting and recruitment and for a wide class
of $f$, allows to stabilize (locally or globally) a fixed point of $f$. If
a point which is not a fixed point of $f$ has to be stabilized, the target oriented control
is an appropriate method for achieving this goal. As a particular case, we consider  
pest control applied to pest populations with delayed density-dependence. This corresponds to
a proportional feedback method, which includes harvesting only, for higher order equations.
\end{abstract}


\begin{keyword}
Chaos \sep target oriented control \sep higher order difference equation  \sep globally 
asymptotically stable fixed point \sep delay Ricker model
\sep Pielou equation

{\bf AMS Subject Classification:} 93D15 (primary), 92D25, 39A30, 93C55 (secondary)
\end{keyword}   

\end{frontmatter}



\section{Introduction}

Controlling chaos consists in stabilizing nonlinear systems with chaotic dynamics 
\cite{andrievskii2003control,corron2000controlling,sanjuan2010recent}. 
The target state for control can be a steady state, a periodic orbit, or even a particular aperiodic trajectory in a chaotic attractor.
The best way to achieve any of these goals depends on the nature of the situation modeled by the system.
For instance, in population dynamics, control methods incorporating either harvesting or stocking, or both, are more appropriate 
than others, e.g. OGY method \cite{ott1990controlling}, which performs continuous small perturbations in the parameters of the 
system. 

Using OGY method, stabilization of otherwise unstable fixed point or cycle was achieved experimentally in \cite{ditto}.
The experimental system consisted of a gravitationally buckled, amorphous magnetoelastic ribbon.
The chosen ribbon material exhibited very large reversible changes of Young's modulus with the application of small
magnetic fields. Oscillation of the ribbon were brought to a chosen regime using the control of the distance from
the chosen orbit \cite{ditto}. However, the stabilized orbit had to be an orbit of the unperturbed system.

The main motivation of our investigation is population dynamics where chaotic orbits coming close enough to zero 
can threaten population survival and cause unpredictable changes in a relevant food chain. Higher density values 
may be as dangerous as low ones due to either extinction stipulated by the previous overpopulation and accumulated pollution 
\cite{ricker1954stock}, or connected to paradox of enrichment \cite{rosenzweig}. If the target is pest eradication
then harvesting at each time step can be an appropriate strategy. However, if the purpose is to keep the population in certain
bounds, the control should incorporate both harvesting and stock recruitment. A recently suggested target oriented control
\cite{Dattani} seems to be an adequate method to achieve this goal: the control intensity, as in OGY method, depends on
the deviation of the stock size from the chosen target---the more distant the population density from the target value, the more 
intensive the control is. Since 2011, when the target oriented control was introduced, there were several developments 
\cite{Chaos2014,TPC} justifying a possibility to stabilize a fixed point (for a prescribed target) and exploring 
the form of the stock-recruitment dependency when such stabilization is possible. The purpose of the present
paper is to overcome the following shortcomings of previous contributions:
\begin{enumerate}      
\item
Most of control methods considered suitable for stabilizing population dynamics have been studied in the framework of first order difference equations  \cite{braverman2012stabilization, Dattani, franco2014stabilizing, franco2013stabilization, gm,
hilker2013harvesting, liz2010control, schreiber2001chaos,tung2014comparison}. In any physical application, this means that we consider a scalar system only, and this system has no memory: the next state depends on the previous state only.
In biological applications, first order equations can describe non-structured populations that survive for one season only,
with overwintering offspring. The framework of higher order equations allows to consider multi-seasonal interactions even with non-overlapping 
generations. Moreover, dynamics of a structured population in certain cases can be described by a higher order difference equation; e.g. \cite{franco2014global}.  

\item
OGY and several similar methods (for example, prediction based control \cite{braverman2012stabilization}) 
are focused on stabilization of either a fixed point or an unstable orbit
of the original system. Other strategies, such as proportional feedback \cite{gm}, 
involve harvesting only and stabilize a point which is typically closer 
to zero than fixed points of the non-controlled equation. However, physical or ecological considerations (for example, required density
to sustain functioning of a food chain or providing a sufficient supply for harvesting)
lead to the necessity to stabilize a point which {\em is not} a fixed point 
of the system. Here we illustrate that, with target oriented control, it is possible.
It will allow to keep a controlled population at a prescribed level, certainly, at a cost.
\end{enumerate}

The present paper is devoted to a general control strategy with the goal 
of stabilizing steady states of a broad class of higher order difference equations which include chaotic and non-chaotic systems.

We recall that first order difference equations 
\begin{equation}
\label{ec_dim1}
u_{n+1}=h(u_n), \quad n= 0,1,2,\dots
\end{equation}     
are suitable for modeling single species populations with non-overlapping generations. The map $h$ usually takes the form 
$h(x)=x g(x)$ with $g$ being a positive decreasing map regulating the intraspecific competition for resources. 
The Ricker model \cite{ricker1954stock}, $u_{n+1}=u_n \exp(r-u_n)$, and the Beverton-Holt model \cite{beverton1957dynamics}, 
$u_{n+1}= r  u_n / (1+u_{n})$, are examples of \eqref{ec_dim1} broadly used in theoretical ecology. 
However, the evidence of the existence of explicit time lags in the intraspecific regulatory mechanisms for some 
species \cite{levin1976note} makes it necessary to incorporate delays in the equations to obtain more realistic 
models. Examples of such equations related to the Ricker and Beverton-Holt models are the delay Ricker equation \cite{levin1976note}
\begin{equation}
\label{Rickerdelayed}
u_{n+1}=u_n \exp(r-u_{n-k+1})
\end{equation}
and the Pielou equation \cite{Pielou1}
\begin{equation}
\label{Pielou}
u_{n+1}=\frac{r u_n}{1+u_{n-k+1}},
\end{equation}
where $k\ge 2$ is a fixed natural number determining the time lag. Clearly, both \eqref{Rickerdelayed} and \eqref{Pielou} are 
special cases of the $k$th-order difference equation  
\begin{equation}
\label{original}
u_{n+1}=f(u_n,u_{n-1}, \dots, u_{n-k+1}), \quad n=0,1,2,\dots 
\end{equation}
where $f\colon \R^k_+ \to \R_+$, $\R_+=[0,\infty)$. Moreover, similarly to the first order equation \eqref{ec_dim1}, the higher 
order difference equation \eqref{original}, 
and particularly \eqref{Rickerdelayed} and \eqref{Pielou},  can show complicated dynamics. Therefore, the design and study of control strategies for the higher order equation \eqref{original} 
is a natural extension of the stabilization problem for the first order equation \eqref{ec_dim1}. 

Here, we generalize a method called target oriented control. Target oriented control (TOC) method 
\begin{equation}
\label{TOC_eq}
u_{n+1}=h\left( c T+(1-c)u_n \right), \quad T\ge 0, \; c\in [0,1),
\end{equation}
was developed in \cite{Dattani} with the aim to stabilize the dynamics of the first order equation \eqref{ec_dim1}. Parameter $T$ is called target 
and parameter $c$ measures the control intensity. Essentially, TOC increases the state variable if it is smaller than the target and reduces it if it is larger. 
For the values of $c$ close enough to one, TOC can provide 
global stabilization of unimodal maps with a negative Schwarzian derivative \cite{TPC} and some other models, where the smoothness conditions are 
relaxed  \cite{Chaos2014}. 
Following \cite{TPC}, we note that \eqref{TOC_eq} is a combination of the linear transformation of the variable
 \begin{equation}
 \label{phidef}
 \phi(x)=cT+(1-c)x
 \end{equation}
and the function $h$. Moreover, if we  switch the order and consider the modified target oriented control (MTOC) 
 \begin{equation}
 \label{MTOC_eq}
 u_{n+1}=cT+(1-c)h(u_n), \quad T\ge 0, \; c\in [0,1),
 \end{equation}
 then the fixed point $K_c$ of the controlled equation \eqref{TOC_eq} is globally (locally) asymptotically stable 
 if and only if the fixed point $P_c=\phi(K_c)$ of \eqref{MTOC_eq} is globally (locally) asymptotically stable.

 The framework of MTOC provides a natural generalization of target control methods to higher order difference equations.
 In particular, we propose the following control applied to the uncontrolled equation \eqref{original} 
 \begin{equation}
 \label{MTOC_eq_high}
 u_{n+1}=cT+(1-c)f(u_n,u_{n-1}, \dots, u_{n-k+1}), \quad T\ge 0, \; c\in [0,1).
 \end{equation}
 Further we will refer to the controlled equation \eqref{MTOC_eq_high} as HMTOC (higher-order modified target oriented control).

If we assume the zero target $T=0$ in \eqref{TOC_eq}, then we obtain the proportional feedback method (PF),
\begin{equation}
\label{PF_eq}
u_{n+1}=h((1-c) u_n), \quad c \in [0,1),
\end{equation}
consisting in a reduction of the state variable, proportional to the size of this variable \cite{gm}. 
The assumption of the proportional reduction is aligned with the idea of constant effort harvesting, without any stocking.  
In \eqref{PF_eq}, harvesting occurs before reproduction.
Switching the variable reduction function $\psi(x)=(1-c)x$ with the map $h$, we get a modified proportional feedback method (MPF) 
in which harvesting takes place after reproduction 
\begin{equation}
\label{MPF_eq} 
u_{n+1}=(1-c) h(u_n), \quad c \in [0,1).
\end{equation} 
Similarly, the control in \eqref{MPF_eq} can be extended to involve higher order methods. We obtain a modified version 
of the proportional feedback control for higher order equations
\begin{equation} 
\label{MPF_eq_high}
u_{n+1}=(1-c)f(u_n,u_{n-1}, \dots, u_{n-k+1}), \quad c \in [0,1),
\end{equation} 
which is a particular case of HMTOC for the target $T=0$.

The main results of the present paper are the following:
\begin{enumerate}
\item 
We obtain sufficient conditions for local and global stabilization of a fixed point with 
HMTOC. As illustrated by numerical examples, some of these conditions are sharp.
\item 
Stabilization of the zero equilibrium with the proportional feedback method is considered
as a particular case of the above results.
\item 
The minimal stabilizing control intensity $c$ is estimated. The flexibility for the choice
of the point to be stabilized is one of the advantages of HMTOC. However, stabilization conditions
depend on this choice.  
\end{enumerate}

The paper is organized as follows. Section 2 deals with local stabilization of higher order equations. In Section 3, we justify the possibility of 
global stabilization. Section 4 contains the proof of the fact that with target oriented control, any prescribed point can be stabilized. 
Some examples and numerical illustrations are presented in Section 5.
Finally, Section 6 contains a summary.

\section{Local stabilization }

Our first result gives a sufficient condition for HMTOC to have at least a positive fixed point for all control intensities. It complements and slightly generalizes Lemma 1 in \cite{TPC}. All the proofs of results in this section can be found in the Appendix.
\begin{lemma} 
\label{lemma_exist}
(i) Assume that $f\colon \R^k_+ \to \R_+$ is continuous and there exists a positive constant $M$ such that $f(M,\dots,M) \le  M$. Then, for any $T \in (0,M]$ there exists at least a positive fixed point $P_c$ of HMTOC in $(0,M]$  for every $c \in (0, 1)$. 

(ii) In particular, if there exists a positive constant $\overline{M}$ such that $f(x,\dots,x) \le  x$ for $x\ge \overline{M}$, then for 
any fixed target $T>0$ there exists at least a positive fixed point $P_c$ of HMTOC in the interval $(0,\max\{T,\overline{M} \}]$  for 
every $c \in (0, 1)$.
\end{lemma}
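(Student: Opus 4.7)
The plan is to reduce the existence of a fixed point of HMTOC to a one-variable root-finding problem and then apply the Intermediate Value Theorem. A positive fixed point $P_c$ of equation \eqref{MTOC_eq_high} is, by definition, a positive constant sequence solution, i.e., a positive number $x$ satisfying $x = cT + (1-c) f(x,\dots,x)$. So I introduce the continuous function
\begin{equation*}
g(x) = cT + (1-c)\, f(x,\dots,x) - x, \qquad x \in [0, M],
\end{equation*}
and look for a zero of $g$ in $(0, M]$.

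For part (i), first I would evaluate $g$ at the left endpoint: since $T > 0$, $c \in (0,1)$, and $f(0,\dots,0) \ge 0$ (because $f$ takes values in $\R_+$), we obtain
\begin{equation*}
g(0) = cT + (1-c)\, f(0,\dots,0) \ge cT > 0.
\end{equation*}
Next I would evaluate $g$ at $x = M$, using the hypothesis $f(M,\dots,M) \le M$ and the assumption $T \le M$:
\begin{equation*}
g(M) = cT + (1-c)\, f(M,\dots,M) - M \le cT + (1-c) M - M = c(T - M) \le 0.
\end{equation*}
Because $g$ is continuous on $[0,M]$ (composition of continuous maps), the Intermediate Value Theorem yields some $P_c \in (0, M]$ with $g(P_c) = 0$. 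The strict inequality $g(0) > 0$ rules out $P_c = 0$, so $P_c$ is positive; if $g(M) = 0$ the fixed point may coincide with $M$, which is allowed by the statement.

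Part (ii) I would derive as an immediate corollary: set $M := \max\{T, \overline{M}\}$. Then $M \ge \overline{M}$, so the standing hypothesis gives $f(M, \dots, M) \le M$, and $T \le M$ by construction. Applying part (i) with this choice of $M$ produces a positive fixed point in $(0, \max\{T, \overline{M}\}]$. There is no real obstacle here; the only point that requires a small amount of care is ensuring $g(0) > 0$ strictly so that the fixed point delivered by IVT lies in the \emph{open} half-interval at the origin, which is precisely where the assumption $T > 0$ (together with $c > 0$) is used.
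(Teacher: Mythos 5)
Your proof is correct and follows essentially the same route as the paper's: both reduce the fixed-point problem to the scalar equation $x=cT+(1-c)f(x,\dots,x)$, check the sign of the difference at $0$ (using $cT>0$) and at $M$ (using $f(M,\dots,M)\le M$ and $T\le M$), and conclude by the Intermediate Value Theorem, with part (ii) obtained by setting $M=\max\{T,\overline{M}\}$. No gaps.
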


Next, we show that HMTOC is able to asymptotically stabilize a fixed point if a sufficiently strong control is implemented.

\begin{thm}
\label{p_esta}
Assume that $f\colon \R^k_+ \to \R_+$ is continuously differentiable and that there exists a bounded interval $I\subset \R_+$ such that HMTOC with target $T>0$ has at least a fixed point $P_c$ in $I$ for every $c \in (0, 1)$. Then, there exists $c^*\ge 0$ such that $P_c$ is asymptotically stable for $c\in  (c^*, 1)$.   
\end{thm}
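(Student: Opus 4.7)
The plan is to linearize the HMTOC equation around the fixed point $P_c$ and show that, as $c\to 1^-$, all roots of the characteristic polynomial collapse toward the origin, so in particular they all lie strictly inside the unit disk once $c$ is close enough to $1$. Writing $F(x_1,\dots,x_k) = cT + (1-c)f(x_1,\dots,x_k)$, the linearization at $(P_c,\dots,P_c)$ is governed by the coefficients
\[
F_j(c) \;=\; \frac{\partial F}{\partial x_j}(P_c,\dots,P_c) \;=\; (1-c)\,a_j(c), \qquad a_j(c):=\frac{\partial f}{\partial x_j}(P_c,\dots,P_c),
\]
and the associated characteristic polynomial is
\[
p_c(\lambda)\;=\;\lambda^k - (1-c)\bigl[a_1(c)\lambda^{k-1}+a_2(c)\lambda^{k-2}+\dots+a_k(c)\bigr].
\]

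Next I would use the hypothesis that $P_c\in I$ for all $c\in(0,1)$ and that $I$ is bounded. Since $f$ is continuously differentiable on $\mathbb{R}_+^k$, the partial derivatives $\partial f/\partial x_j$ are continuous, hence bounded on the compact box $\overline{I}^{\,k}\subset\mathbb{R}_+^k$. Therefore there exists a constant $M>0$, independent of $c$, such that
\[
\sum_{j=1}^{k}|a_j(c)|\;\le\; M \qquad \text{for every }c\in(0,1).
\]

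The core estimate is then elementary: if $\lambda\in\mathbb{C}$ is a root of $p_c$ with $|\lambda|\ge 1$, the identity $\lambda^k=(1-c)\sum_{j=1}^k a_j(c)\lambda^{k-j}$ together with $|\lambda|^{k-j}\le |\lambda|^{k-1}$ for $j\ge 1$ gives
\[
|\lambda|^{k} \;\le\; (1-c)\,|\lambda|^{k-1}\sum_{j=1}^{k}|a_j(c)| \;\le\; (1-c)\,M\,|\lambda|^{k-1},
\]
hence $|\lambda|\le (1-c)M$. Choosing $c^* := \max\{0,\,1-1/(M+1)\}$, every $c\in(c^*,1)$ forces $(1-c)M<1$, contradicting $|\lambda|\ge 1$. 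Consequently all roots of $p_c$ satisfy $|\lambda|<1$ and $P_c$ is asymptotically stable by the standard linearization principle for higher order difference equations.

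The only step requiring care is the uniform bound on $\sum_j|a_j(c)|$, which hinges on $P_c$ remaining in the bounded set $I$ (supplied by hypothesis) and on $f\in C^1$; without these one could not rule out the derivatives blowing up as $c\to 1$. Everything else is a transparent consequence of the factor $(1-c)$ multiplying the nontrivial part of the characteristic polynomial.
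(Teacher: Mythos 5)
Your proof is correct and follows essentially the same route as the paper: linearize at $P_c$, use the boundedness of $I$ together with $f\in C^1$ to bound the partial derivatives uniformly in $c$, and exploit the factor $(1-c)$ to push all roots of the characteristic polynomial inside the unit disk for $c$ near $1$. The only difference is cosmetic: where you derive the root bound $|\lambda|\le (1-c)M$ by an elementary Cauchy-type estimate, the paper invokes Fujiwara's bound $|\lambda|\le 2(1-c)\max_i |\partial f/\partial x_i(\mathbf{P}_c)|^{1/i}$, yielding a slightly different (but equally valid) explicit $c^*$.
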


From a practical perspective, it is convenient to have an estimation as good as possible for $c^*$ in the statement of Theorem~\ref{p_esta}. Following its proof, this essentially reduces to prove that a polynomial is a Schur polynomial (i.e. with all roots smaller than one in module).  Although there are characterizations for a Schur polynomial, as for example the Schur test or the Jury conditions \cite[Corollaries 3.4.91 and 3.4.98]{hinrichsen2005mathematical},  
obtaining an explicit expression for $c^*$ is difficult for two main reasons. First, because if $k$ is large, 
then expressions in these characterizations get too complicated. Second, because we need to know with precision the interval $I$ where the fixed point $P_c$ of HMTOC is, in order to reduce the range of possible values for the partial derivatives of $f$. Of course, the second issue 
disappears if the target $T=K$ is a fixed point of $f$, that is,  
\begin{equation}
\label{fixed}
K=f(K,K, \dots,K),
\end{equation}
because then $K$ is also a fixed point of HMTOC for every $c \in (0, 1)$,  and the interval $I$ reduces to the point $K$. 
In the next result, we take advantage of this fact and present a sharp estimation of the control intensity necessary to stabilize 
a fixed point $K$ of $f$ when $k=2$ and the target coincides with the fixed point $T=K$.

\begin{thm}
\label{p_esta_k2}
Assume that $f\colon \R^2_+ \to \R_+$ is continuously differentiable and $K\in \R_+$  satisfies $K=f(K,K)$. Then, the fixed point 
$K$ of the controlled equation HMTOC with the target $T=K$ is asymptotically stable for $c\in (c^*,1)$ where
\[
c^* = \max\left \{0, 1-\frac{1}{\max\{|\frac{\partial f}{\partial y}(\mathbf{K})| ,|\frac{\partial f}{\partial x}(\mathbf{K})|+\frac{\partial f}{\partial y}(\mathbf{K})\}} \right  \},
\] 
($c^*=0$ if $\frac{\partial f}{\partial x}(\mathbf{K})=\frac{\partial f}{\partial y}(\mathbf{K})=0$), and $\mathbf{K}=(K,K)$.
\end{thm}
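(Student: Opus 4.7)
The plan is to apply the principle of linearized stability to HMTOC at $K$ and then pinpoint when the associated characteristic polynomial is Schur. Since $T=K$ and $K=f(K,K)$, the constant sequence $u_n\equiv K$ automatically solves the controlled recursion $u_{n+1}=cK+(1-c)f(u_n,u_{n-1})$. Setting $v_n=u_n-K$ and Taylor expanding $f$ at $\mathbf{K}=(K,K)$, I would obtain
\[
v_{n+1} = (1-c)\,\frac{\partial f}{\partial x}(\mathbf{K})\,v_n + (1-c)\,\frac{\partial f}{\partial y}(\mathbf{K})\,v_{n-1} + o(\|v\|),
\]
whose linear part has characteristic polynomial $p(\lambda)=\lambda^2-(1-c)f_x\,\lambda-(1-c)f_y$, writing $f_x,f_y$ for $\partial f/\partial x(\mathbf{K})$ and $\partial f/\partial y(\mathbf{K})$.

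Next I would invoke the Schur--Jury criterion for a quadratic: every root of $p$ lies strictly inside the unit disk iff $p(1)>0$, $p(-1)>0$, and the constant term of $p$ is less than one in absolute value. Translated, these amount to
\[
(1-c)(f_x+f_y)<1,\qquad (1-c)(f_y-f_x)<1,\qquad (1-c)|f_y|<1.
\]
The principal step, and the real content of the theorem, is to collapse the first two inequalities into one. Since $1-c>0$, they are jointly equivalent to $(1-c)\max\{f_x+f_y,\,f_y-f_x\}<1$, and the identity $\max\{f_x+f_y,\,f_y-f_x\}=|f_x|+f_y$ turns this into $(1-c)(|f_x|+f_y)<1$. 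Combined with the third inequality, all three Schur conditions reduce to the single requirement $(1-c)M<1$ with $M:=\max\{|f_y|,\,|f_x|+f_y\}$. The slightly delicate point is that the sign of $f_y$ controls which term in the maximum binds: when $f_y\ge 0$ the quantity $|f_x|+f_y$ dominates, whereas when $f_y<0$ it may drop below $|f_y|=-f_y$, so one must verify that both cases are correctly absorbed by $M$ and that $M\ge 0$ always (which follows from $|f_y|\ge 0$).

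Finally, solving $(1-c)M<1$ under the constraint $c\in[0,1)$ yields $c>1-1/M$ when $M>0$ and no restriction when $M=0$, so the threshold is $c^*=\max\{0,\,1-1/M\}$, matching the statement; the degenerate subcase $f_x(\mathbf{K})=f_y(\mathbf{K})=0$ gives $M=0$ and hence $c^*=0$, consistent with the parenthetical remark. Asymptotic stability of the nonlinear recursion on $(c^*,1)$ then follows from the standard linearized stability principle for difference equations, applicable because the right-hand side $cK+(1-c)f$ is continuously differentiable.
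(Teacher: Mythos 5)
Your proposal is correct and follows essentially the same route as the paper: linearize HMTOC at $K$, form the characteristic polynomial $\lambda^2-(1-c)f_x\lambda-(1-c)f_y$, and apply the Jury/Schur conditions for a quadratic, which collapse to $(1-c)\max\{|f_y|,\,|f_x|+f_y\}<1$. The only cosmetic difference is that the paper writes the Jury conditions in the chained form $2>1-(1-c)f_y>(1-c)|f_x|$ rather than as the triple $p(1)>0$, $p(-1)>0$, $|a_0|<1$, but the resulting algebra and threshold $c^*$ are identical.
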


As we have said, if $k$ is larger, then optimal expressions for $c^*$ get complicated. 
Nevertheless, when \eqref{fixed} holds, it is possible to get easy-to-calculate estimates such as the following one.
\begin{thm}
\label{p_esta_sufficient}
Assume that $f\colon \R^k_+ \to \R_+$ is continuously differentiable and $K\in \R_+ $ satisfies $K=f(K,K, \dots, K)$. Then, the 
fixed point $K$ of the controlled equation HMTOC with the target $T=K$ is asymptotically stable for $c\in (c^*,1)$, where
\[
c^* = \max\left \{0, 1-\frac{1}{\sum_{j=1}^k |\frac{\partial f}{\partial x_j}(\mathbf{K})|} \right  \},
\]  
$\mathbf{K}=(K,K, \dots, K)$.   
\end{thm}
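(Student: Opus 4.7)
The plan is to analyze local asymptotic stability via linearization and then apply a classical estimate for the roots of the resulting characteristic polynomial. First I would rewrite the controlled equation \eqref{MTOC_eq_high} with $T=K$ as a $k$-dimensional first-order system by letting $v_n = (u_n, u_{n-1}, \dots, u_{n-k+1})$; since $K$ is a fixed point of $f$, the point $\mathbf{K}$ is a fixed point of this system for every $c\in(0,1)$. The Jacobian at $\mathbf{K}$ is the companion-type matrix whose first row is $((1-c)f_{x_1}(\mathbf{K}), (1-c)f_{x_2}(\mathbf{K}), \dots, (1-c)f_{x_k}(\mathbf{K}))$ with the usual shift in the remaining rows, so its characteristic polynomial is
\[
p(\lambda) = \lambda^k - a_1 \lambda^{k-1} - a_2 \lambda^{k-2} - \cdots - a_k,
\qquad a_j := (1-c)\frac{\partial f}{\partial x_j}(\mathbf{K}).
\]

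The second step is to establish the Schur property of $p$ under the hypothesis $\sum_{j=1}^k |a_j| < 1$. This is the standard ``dominant leading coefficient'' argument: suppose $p(\lambda)=0$ with $|\lambda|\ge 1$; then
\[
|\lambda|^k \;=\; \Bigl|\sum_{j=1}^k a_j \lambda^{k-j}\Bigr| \;\le\; \sum_{j=1}^k |a_j|\, |\lambda|^{k-j} \;\le\; |\lambda|^{k-1} \sum_{j=1}^k |a_j|,
\]
where the last inequality uses $|\lambda|\ge 1$ and hence $|\lambda|^{k-j}\le |\lambda|^{k-1}$. Dividing by $|\lambda|^{k-1}\ge 1$ gives $|\lambda| \le \sum_{j=1}^k |a_j| < 1$, a contradiction. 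So every root of $p$ lies strictly inside the unit disk, and by the principle of linearized stability, $K$ is asymptotically stable for HMTOC.

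The third step is the bookkeeping: the sufficient condition $\sum_{j=1}^k |a_j| < 1$ becomes $(1-c)\sum_{j=1}^k |\tfrac{\partial f}{\partial x_j}(\mathbf{K})| < 1$. If $\sum_{j=1}^k |\tfrac{\partial f}{\partial x_j}(\mathbf{K})| \le 1$, this already holds for all $c\in[0,1)$, so any $c^*=0$ works. Otherwise, solving for $c$ gives exactly $c > 1 - 1/\sum_{j=1}^k |\tfrac{\partial f}{\partial x_j}(\mathbf{K})|$, and the two cases combine into the stated formula for $c^*$.

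I do not anticipate a real obstacle here: the only delicate point is being careful that the triangle-inequality estimate is sharp enough for the theorem's stated $c^*$, which it is, since the bound on $|\lambda|$ is tight precisely when all the $\tfrac{\partial f}{\partial x_j}(\mathbf{K})$ have the same sign and $\lambda$ is real and positive. This also explains why the estimate is a sufficient condition only and need not be sharp in general, in contrast with the $k=2$ case of Theorem~\ref{p_esta_k2}, where one can exploit the explicit Schur/Jury conditions for a quadratic.
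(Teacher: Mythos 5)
Your proof is correct and follows essentially the same route as the paper: the paper simply observes that the hypothesis gives $\sum_{j=1}^k (1-c)\left|\frac{\partial f}{\partial x_j}(\mathbf{K})\right|<1$ and cites Theorem~1.2.5 of Camouzis--Ladas for the conclusion, whereas you additionally supply the standard triangle-inequality proof of that cited linear stability criterion. The only cosmetic slip is your remark that the condition holds for all $c\in[0,1)$ when the sum equals exactly $1$ (it fails at $c=0$ there), but this is immaterial since the theorem only claims stability for $c\in(c^*,1)=(0,1)$ in that case.
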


\section{Global stabilization of a fixed point}

In this section, we assume that the point $K\in \R_+$ to be stabilized is a fixed point for the uncontrolled system, that is, it satisfies \eqref{fixed}. Further, we assume that there exists $L>0$ such that
\begin{equation} 
\label{Lipschitz_2}
| f(\mathbf{x})-K | \leq L \| \mathbf{x}-(K,K, \dots, K) \|, ~~\mathbf{x} \in \R^k_+
\end{equation} 
where $\mathbf{x}=(x_1, \dots,x_k)$ and $\| \mathbf{x} \| =\max_{1\leq j\leq n} |x_j|$,
however the proofs below are easily adapted to any other vector norm. We note that inequality \eqref{Lipschitz_2} 
implies that $K$ is a fixed point of $f$.

Our first result gives not only a sufficient condition for global stability of a fixed point of the controlled 
equation but also an estimate of the control intensity necessary to reach it, which depends on the constant $L$ in 
condition \eqref{Lipschitz_2}.  Its proof uses some ideas from \cite{BBL_JDEA2005}. \begin{thm}
\label{theorem_add3}
Assume that $f\colon \R^k_+ \to \R_+$ satisfies condition \eqref{Lipschitz_2}. 
Then for $c \in (c^*,1)$ with $c^*= \max \{0,1-\frac{1}{L}\}$, the fixed point $K$ is a globally 
asymptotically stable fixed point for the controlled equation HMTOC with $T=K$, that is, any sequence 
starting with $(u_{1-k},u_{2-k},\dots,u_{0}) \in \R^k_+$ and satisfying HMTOC with $T=K$ converges to $K$.
\end{thm}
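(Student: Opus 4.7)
The plan is to transfer the Lipschitz bound on $f$ at $K$ to a contraction-type inequality for the deviations $|u_n-K|$ and then exploit the sliding-window structure of the $k$-th order recursion to get a geometric decay along a subsequence.

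First I would rewrite HMTOC with $T=K$ in the deviation form. Subtracting $K$ from both sides and using $K=f(K,\dots,K)$ is not needed explicitly; it is enough to compute
\[
u_{n+1}-K=cK+(1-c)f(u_n,\dots,u_{n-k+1})-K=(1-c)\bigl(f(u_n,\dots,u_{n-k+1})-K\bigr).
\]
Applying \eqref{Lipschitz_2} with $\mathbf{x}=(u_n,\dots,u_{n-k+1})$ and the sup norm gives the basic estimate
\[
|u_{n+1}-K|\le (1-c)L\,\max_{0\le j\le k-1}|u_{n-j}-K|.
\]
Set $q:=(1-c)L$. The hypothesis $c\in(c^*,1)$ with $c^*=\max\{0,1-1/L\}$ is precisely $q<1$, which is the workhorse inequality for the rest of the argument.

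Next I would introduce the sliding-window maximum $M_n:=\max_{0\le j\le k-1}|u_{n-j}-K|$ and track how it evolves. Because $M_{n+1}$ involves $u_{n+1},u_n,\dots,u_{n-k+2}$, all of which are bounded in modulus by $\max\{qM_n,M_n\}=M_n$, the sequence $\{M_n\}$ is non-increasing; in particular $M_n\le M_0$ for all $n\ge 0$, giving boundedness of the orbit from the start. The crucial observation is that $M_{n+k}$ only sees the indices $n+1,\dots,n+k$, and each of these satisfies $|u_{n+i}-K|\le qM_{n+i-1}\le qM_n$ by monotonicity of $M$. Hence
\[
M_{n+k}\le q\,M_n,
\]
and iterating along the subsequence $n=mk$ gives $M_{mk}\le q^m M_0\to 0$. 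Combined with monotonicity of $M_n$ this forces $M_n\to 0$, hence $u_n\to K$ for any initial data in $\R^k_+$.

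I do not expect a serious obstacle: the Lipschitz hypothesis is already ``made to order'' for a contraction mapping style proof, and the only delicate point is the bookkeeping that the sliding window of length $k$ contracts only after $k$ steps, not one. Should one wish to exhibit an explicit global rate, the computation above in fact yields $|u_n-K|\le q^{\lfloor n/k\rfloor}M_0$, which could be recorded as a byproduct. The argument uses nothing about the sign or smoothness of $f$ beyond continuity implicit in \eqref{Lipschitz_2}, so it applies in the full generality stated in the theorem.
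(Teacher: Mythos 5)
Your proposal is correct and follows essentially the same route as the paper: both reduce HMTOC with $T=K$ to the one-step estimate $|u_{n+1}-K|\le (1-c)L\max_{n-k+1\le j\le n}|u_j-K|$ and then iterate over windows of length $k$ to obtain the geometric decay $|u_n-K|\le q^{\lfloor n/k\rfloor}\max_{-k+1\le j\le 0}|u_j-K|$. Your sliding-window maximum $M_n$ is just a tidier bookkeeping device for the induction step the paper carries out with $\theta=(1-c)L$.
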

\begin{proof} 
Without loss of generality, let us assume that $L\ge 1$ in inequality~\eqref{Lipschitz_2}. We fix $\theta \in (0,1)$ and 
let $c=1-\frac{\theta}{L}$, where $c\in (0,1)$.
Then, using equation \eqref{MTOC_eq_high} and inequality~\eqref{Lipschitz_2}, we have
\begin{eqnarray*}
|u_{n+1}-K| & = & (1-c)|f(u_n,u_{n-1}, \dots, u_{n-k+1})-K|
\nonumber \\ &=& \frac{\theta}{L} |f(u_n,u_{n-1}, \dots, 
u_{n-k+1})-K| 
\nonumber 
\\ & \leq & \theta \frac{L}{L} 
\| (u_n-K,u_{n-1}-K, \dots, u_{n-k+1}-K) \| 
\nonumber \\
&=&  \theta \max_{n-k+1 \leq j \leq n} |u_j-K|,
\end{eqnarray*}
therefore
\begin{equation*}
\label{contraction}
| u_{n+1}-K| \leq \theta \max_{n-k+1 \leq j \leq n} |u_j-K|.
\end{equation*}
Continuing this process, we obtain
\begin{equation}
\label{induction}
|u_{n+l}-K| \leq \theta \max_{n-k+1 \leq j \leq n} |u_j-K|,~~l=1, \dots, k.
\end{equation}
Using \eqref{induction} as an induction step, we obtain
\begin{equation*}
\label{estimate}
| u_{n}-K| \leq \theta^{[n/k]} \max_{-k+1\leq j \leq 0} |u_j-K|,
\end{equation*}
where $[t]$ is the integer part of $t$. Then 
\[
\lim_{n\to\infty} u_n=K,
\]
moreover, there is a guaranteed rate of convergence. Thus, $K$ is a globally asymptotically stable 
fixed point of equation~\eqref{MTOC_eq_high} for any $c \in (c^*,1)$ with 
\begin{equation*}
\label{alpha}
c^*=1-\frac{1}{L}.
\end{equation*}

\end{proof}

Applying Theorem~\ref{theorem_add3} to \eqref{MPF_eq_high}, we obtain the following 
result about the stabilization of the origin with proportional control.

\begin{corollary}
	\label{corollary_add3}
	Assume that $f\colon \R^k_+ \to \R_+$ satisfies condition \eqref{Lipschitz_2} with $K=0$. Then for $c \in 
(c^*,1)$ with $c^*= \max \{0,1-\frac{1}{L}\}$  zero is a globally asymptotically stable fixed point for  the controlled equation \eqref{MPF_eq_high}.
\end{corollary}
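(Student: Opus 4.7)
The plan is essentially to observe that Corollary~\ref{corollary_add3} is a direct specialization of Theorem~\ref{theorem_add3} to the case $K=0$. First I would note that setting $T=0$ in the HMTOC equation \eqref{MTOC_eq_high} yields exactly the modified proportional feedback equation \eqref{MPF_eq_high}, so the two controlled systems are literally the same object when the target is chosen as $T=0$.

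Next I would verify that the hypotheses of Theorem~\ref{theorem_add3} are met with $K=0$. Condition \eqref{Lipschitz_2} with $K=0$ reads $|f(\mathbf{x})|\le L\|\mathbf{x}\|$ for all $\mathbf{x}\in\R^k_+$; evaluating at $\mathbf{x}=(0,\dots,0)$ gives $f(0,\dots,0)=0$, so $K=0$ is indeed a fixed point of $f$, i.e.\ \eqref{fixed} holds. Thus the choice $T=K=0$ is legitimate in Theorem~\ref{theorem_add3}, and nothing in the proof of that theorem (which relies only on the contraction estimate $|u_{n+1}-K|=(1-c)|f(\mathbf{u}_n)-K|\le\theta\max_{n-k+1\le j\le n}|u_j-K|$) is affected by the target being zero rather than strictly positive.

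Applying Theorem~\ref{theorem_add3} with $K=T=0$ then yields precisely the statement of the corollary: for every $c\in(c^*,1)$ with $c^*=\max\{0,1-1/L\}$, every solution of \eqref{MPF_eq_high} with initial data in $\R^k_+$ converges to $0$, and $0$ is globally asymptotically stable.

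Since the corollary is a direct specialization, there is no real obstacle; the only point that requires a line of comment is the remark that the Lipschitz-type assumption \eqref{Lipschitz_2} evaluated at the origin automatically supplies the fixed-point identity \eqref{fixed} at $K=0$, so no additional hypothesis on $f(0,\dots,0)$ needs to be imposed. Everything else is a one-line invocation of the preceding theorem.
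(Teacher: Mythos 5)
Your proposal is correct and matches the paper's own treatment: the paper states the corollary as a direct application of Theorem~\ref{theorem_add3} to \eqref{MPF_eq_high}, i.e.\ HMTOC with $T=K=0$, exactly as you do. Your extra remark that \eqref{Lipschitz_2} with $K=0$ forces $f(0,\dots,0)=0$ is also consistent with the paper's earlier observation that \eqref{Lipschitz_2} implies $K$ is a fixed point of $f$.
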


In Theorem~\ref{theorem_add3} and Corollary~\ref{corollary_add3}, constant $L$ in \eqref{Lipschitz_2} is used to estimate the 
control intensity necessary to stabilize globally a fixed point: the smaller the value of $L$ the sooner the global stability is 
attained. In some cases the calculation of $L$ can be direct,  as for the Pielou equation with $r\ge 1$ where 
\[
f(\mathbf{x})= f(x_1,\dots,x_k)= \frac{r x_1}{1+x_k}
\]
satisfies \eqref{Lipschitz_2} with $K=r-1$ and $L=r$:
\begin{align*}
|f(\mathbf{x})-K| &= \left| \frac{r x_1}{1+x_k} - (r-1) \right|  =  \left| \frac{r x_1 - (r -1) x_k-(r-1)}{1+x_k} \right| \\
& =\left| \frac{r(x_1-(r-1))}{1+x_k} - \frac{(r-1)(x_k-(r-1))}{1+x_k} \right| \\
&\leq r |x_1-(r-1)|  + (r-1) |x_k-(r-1)| \leq L \| \mathbf{x}-(K,\dots,K)\|, \quad \mathbf{x} \in \R^k_+.
\end{align*}
However,  in general finding $L$ could be hard. Therefore, it is interesting to have easy ways to calculate $L$ for a given map. Evidently, if $f$ is globally Lipschitz continuous and $K$ is a fixed point of $f$, then we could take $L$ as the global Lipschitz constant of $f$. 
The proof of the next result shows how to calculate $L$ if $f$ is a locally Lipschitz continuous bounded function.

\begin{lemma}
Let $f\colon \R^k_+ \to \R_+$ be a locally Lipschitz continuous bounded function and $K$ be a fixed point of $f$, then there exists 
$L\ge 1$ such that condition~\eqref{Lipschitz_2} holds for $\mathbf{x} \in \R^k_+$.
\end{lemma}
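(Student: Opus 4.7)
The plan is to exploit the fact that boundedness of $f$ controls $|f(\mathbf{x}) - K|$ globally, while local Lipschitz continuity controls it sharply near $\mathbf{K} = (K,\dots,K)$, and then glue these two estimates together by a case split on the distance $\|\mathbf{x} - \mathbf{K}\|$.

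Concretely, I would first fix a compact neighborhood of $\mathbf{K}$ inside $\R^k_+$, for example the set $A = \{\mathbf{x} \in \R^k_+ : \|\mathbf{x} - \mathbf{K}\| \le 1\}$. Since $f$ is locally Lipschitz continuous and $A$ is compact, a standard covering argument (cover $A$ by finitely many balls on each of which $f$ is Lipschitz) yields a constant $L_1 \ge 0$ such that
\[
|f(\mathbf{x}) - f(\mathbf{y})| \le L_1 \|\mathbf{x} - \mathbf{y}\|, \quad \mathbf{x},\mathbf{y} \in A.
\]
In particular, using $f(\mathbf{K}) = K$, I obtain $|f(\mathbf{x}) - K| \le L_1 \|\mathbf{x} - \mathbf{K}\|$ whenever $\|\mathbf{x} - \mathbf{K}\| \le 1$.

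Second, since $f$ is bounded on $\R^k_+$, there is some $B > 0$ with $|f(\mathbf{x})| \le B$ for all $\mathbf{x}$, and hence $|f(\mathbf{x}) - K| \le B + K =: L_2$. Thus, whenever $\|\mathbf{x} - \mathbf{K}\| > 1$, I have
\[
|f(\mathbf{x}) - K| \le L_2 < L_2\, \|\mathbf{x} - \mathbf{K}\|.
\]
Setting $L := \max\{1, L_1, L_2\}$ combines the two cases and gives inequality \eqref{Lipschitz_2} on all of $\R^k_+$, with $L \ge 1$ as required.

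There is not much of an obstacle here; the only subtle point is making sure the local Lipschitz constant can be upgraded to a uniform constant on the compact set $A$, which is where a finite subcover is used. The rest is a routine case split based on the magnitude of $\|\mathbf{x} - \mathbf{K}\|$, exploiting the fact that for $\|\mathbf{x} - \mathbf{K}\| > 1$ the trivial bound via boundedness is dominated by a linear-in-distance bound.
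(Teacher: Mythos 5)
Your proof is correct and follows essentially the same route as the paper's: a case split between a compact neighbourhood of $\mathbf{K}=(K,\dots,K)$, where local Lipschitz continuity yields a uniform constant, and its complement, where $\|\mathbf{x}-\mathbf{K}\|$ is bounded below by a positive constant and the boundedness of $f$ converts the trivial bound on $|f(\mathbf{x})-K|$ into a linear-in-distance one. The only difference is cosmetic: you cut at $\|\mathbf{x}-\mathbf{K}\|\le 1$ whereas the paper cuts at the cube $[0,2K]^k$ (and hence divides by $K$), which incidentally makes your version valid also when $K=0$.
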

\begin{proof} Since $f$ is bounded, there exists $A>0$ such that
\begin{equation}   
\label{bound}
0\leq f(\mathbf{x}) \leq A, ~~\mathbf{x} \in \R^k_+. 
\end{equation} 

Moreover, the local Lipschitz continuity of $f$ guarantees that condition~\eqref{Lipschitz_2}
is satisfied for $\mathbf{x}=(x_1,x_2,\dots,x_k)$  such that $0 \leq x_i \leq 2K$,  $i=1, 2, \dots, k$ with some constant $\tilde{L}$.

Next, let at least one of $x_i$ satisfy $x_i>2K$, then 
\begin{equation}
\label{bound_1}
\| \mathbf{x} - (K,K, \dots, K) \|>K.
\end{equation}
Due to \eqref{bound} and $A\geq K>0$, we have
\[
|f(\mathbf{x})-K| \leq \max\{ A-K, K \},
\]
which together with \eqref{bound_1} gives
\begin{equation}
\label{bound_2}
|f(\mathbf{x})-K| \leq \max\left\{ \frac{A}{K}-1, 1 \right\} \| \mathbf{x}-(K,K, \dots, K) \|
\end{equation}
for $\mathbf{x} \not\in [0,2K]^k$.
Thus, we obtain that
\begin{equation*}
\label{Lipschitz_3}
| f(\mathbf{x})-K| \leq L \| \mathbf{x}-(K,K, \dots, K) \|, ~~\mathbf{x} \in \R^k_+
\end{equation*} 
holds with $\displaystyle L=\max\left\{ \tilde{L}, \frac{A}{K}-1, 1 \right\}$. 
\end{proof}


\section{Stabilization of an arbitrary point}

The results contained in previous section allow us to consider
stabilization of a fixed point only. If we aim to stabilize a different point with HMTOC, then we need to apply
a ``corrective" HMTOC first. The next result shows that after applying both such a corrective step and a stabilizing control, we 
are still using HMTOC.

\begin{lemma}
	\label{lemma_add1}
	A combination of two HMTOCs is a HMTOC.
\end{lemma}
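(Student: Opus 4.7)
The plan is to interpret the composition of two HMTOCs as applying one HMTOC to the map obtained from a first HMTOC. So I start with the original map $f$, apply a first corrective HMTOC with parameters $(T_1, c_1)$ to obtain the intermediate map
\[
g(u_n, \dots, u_{n-k+1}) = c_1 T_1 + (1-c_1) f(u_n, \dots, u_{n-k+1}),
\]
and then apply a second (stabilizing) HMTOC to $g$ with parameters $(T_2, c_2)$, i.e., $u_{n+1} = c_2 T_2 + (1-c_2) g(u_n, \dots, u_{n-k+1})$. First I would just expand this out and collect the constant term and the coefficient of $f$.

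The key algebraic step is to recognize that the expansion
\[
u_{n+1} = c_2 T_2 + (1-c_2) c_1 T_1 + (1-c_1)(1-c_2)\, f(u_n, \dots, u_{n-k+1})
\]
has exactly the shape of \eqref{MTOC_eq_high} with new parameters $c := 1 - (1-c_1)(1-c_2) = c_1 + c_2 - c_1 c_2$ and, when $c>0$, $T := [c_2 T_2 + (1-c_2) c_1 T_1]/c$, so that $cT$ matches the constant term and $1-c$ matches the coefficient of $f$.

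What remains is to verify that $(T, c)$ is an admissible HMTOC parameter pair, i.e., $T \ge 0$ and $c \in [0, 1)$. The inequality $c \ge 0$ follows because $c_1 \ge 0$ and $(1-c_1) c_2 \ge 0$; the strict inequality $c < 1$ follows from $1 - c = (1-c_1)(1-c_2) > 0$ since both $c_1, c_2 \in [0, 1)$. Nonnegativity of $T$ is immediate from $T_1, T_2 \ge 0$ together with $c>0$ and the nonnegativity of the coefficients in the numerator.

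The only mildly delicate point (and the one I would flag as the main obstacle, though it is minor) is the degenerate case $c = 0$, which occurs precisely when $c_1 = c_2 = 0$; then no control is applied and the composition reduces to the uncontrolled equation \eqref{original}, which is trivially an HMTOC with $c = 0$ and any value of $T \ge 0$. Handling this case by convention (or as a separate line in the proof) completes the argument.
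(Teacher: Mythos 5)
Your proof is correct and follows essentially the same route as the paper's: expand the composition, read off $c_3=c_1+c_2-c_1c_2$ and the corresponding target, and check $c_3\in[0,1)$, $T_3\ge 0$. The only differences are the (immaterial) order in which the two transformations are nested and your explicit treatment of the degenerate case $c_1=c_2=0$, which the paper sidesteps by assuming $c_1,c_2\in(0,1)$.
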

\begin{proof} 
If $\phi_1(x)=c_1T_1+(1-c_1)x$ is applied after another argument transformation 
$\phi_2(x)=c_2T_2+(1-c_2)x$, then 
\begin{eqnarray*}
	\phi_1\left(\phi_2(f(x_1,x_{2}, \dots, x_{k})) \right) & = & c_1T_1+(1-c_1) \left[c_2T_2+(1-c_2)f(x_1,x_{2}, 
\dots, x_{k})\right]
	\\ &=& c_1T_1+(1-c_1)c_2T_2+(1-c_1)(1-c_2)f(x_1,x_{2}, \dots, x_{k})
	\\ &=& c_3T_3+(1-c_3)f(x_1,x_{2}, \dots, x_{k}),
\end{eqnarray*}
where 
\[ 
c_3=c_1+c_2-c_1c_2, \quad \mbox{ and } \quad T_3=\frac{c_1T_1+c_2T_2-c_1 c_2 T_2}{c_3}.
\]
Obviously $\alpha := (1-c_1)(1-c_2) \in (0,1)$ as long as $c_1,c_2 \in (0,1)$,
so $c_3=1-\alpha \in (0,1)$. On the other hand, the value of $T_3$ is positive as a ratio of two 
positive numbers. \end{proof}

The following result was justified in \cite[Lemma 5]{Chaos2014}.
\begin{lemma}
	\label{lemma_add2a}
	Let $f_1 \colon \R_+ \to \R_+$ be a continuous function satisfying $f_1(x)>0$ for $x>0$. Then for any $K>0$ in the range of $f$ there exist $c_K\in (0,1)$ and $T_K\geq 0$ such that $K$ is a fixed point of $g(x)=c_KT_K+(1-c_K)f_1(x)$.
\end{lemma}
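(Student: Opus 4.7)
The plan is to reduce the claim to a single linear scalar equation and then exhibit $c_K, T_K$ explicitly by a case split on the sign of $f_1(K) - K$. First I would write the fixed-point condition $g(K) = K$ as
\[
K \;=\; c_K T_K + (1-c_K)\, f_1(K),
\]
and use the hypothesis ``$f_1(x)>0$ for $x>0$'' together with $K>0$ to conclude $f_1(K)>0$. Treating $c_K\in(0,1)$ as a free parameter, I can solve
\[
T_K \;=\; \frac{K-(1-c_K)\,f_1(K)}{c_K},
\]
so the whole problem reduces to choosing $c_K$ in $(0,1)$ so that the numerator is nonnegative, i.e., $c_K \geq 1 - K/f_1(K)$.

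Next I would branch on whether $f_1(K)\leq K$ or $f_1(K)>K$. In the first case the threshold $1-K/f_1(K)$ is non-positive, so any $c_K\in(0,1)$ works; for concreteness I would set $c_K=1/2$ and $T_K=2K-f_1(K)\geq 0$. In the second case $1-K/f_1(K)\in(0,1)$, so the open interval $\bigl(1-K/f_1(K),\,1\bigr)$ is non-empty and any $c_K$ chosen there yields $T_K>0$. In both situations the pair $(c_K,T_K)$ satisfies the required conditions.

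I do not expect a genuine obstacle: the argument is essentially a solvability check for one linear relation with positive coefficients, and the only delicate point is verifying that each branch produces $c_K$ \emph{strictly} inside $(0,1)$, which is automatic from the positivity of $f_1(K)$. I would note in passing that the continuity hypothesis and the assumption ``$K$ is in the range of $f$'' are not actually used in this constructive argument; they most likely survive from the analogous statement in \cite{Chaos2014} formulated in terms of the unmodified TOC $g(x)=f_1(c_K T_K + (1-c_K)x)$, where solvability of $f_1(y)=K$ for some $y\geq 0$ is genuinely required by the intermediate value theorem.
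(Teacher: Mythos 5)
Your argument is correct. Writing the fixed-point condition as $K=c_KT_K+(1-c_K)f_1(K)$, solving for $T_K=\bigl(K-(1-c_K)f_1(K)\bigr)/c_K$, and splitting on whether $f_1(K)\le K$ (take $c_K=1/2$, $T_K=2K-f_1(K)\ge K>0$) or $f_1(K)>K$ (take any $c_K\in\bigl(1-K/f_1(K),\,1\bigr)$) is a complete verification; both branches do land strictly inside $(0,1)$ and give $T_K\ge 0$. For comparison: the paper supplies no proof of this lemma at all --- it simply defers to Lemma~5 of \cite{Chaos2014} --- so your self-contained construction is, if anything, more informative than what appears in the text. Your closing observation is also well taken: for the \emph{modified} form $g(x)=c_KT_K+(1-c_K)f_1(x)$ the fixed-point equation involves only the single value $f_1(K)$, so continuity and the hypothesis that $K$ lies in the range of $f_1$ play no role; those hypotheses are genuinely needed only for the unmodified TOC $g(x)=f_1\bigl(c_KT_K+(1-c_K)x\bigr)$, where one must solve $f_1(y)=K$ for some $y\ge 0$ (and where the subsequent use in Corollary~4.3 and Theorem~4.4 naturally restricts $K$ to the range of $x\mapsto f(x,\dots,x)$). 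The hypotheses in the statement appear to be inherited from that setting rather than required by the proof.
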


Lemma~\ref{lemma_add2a} immediately implies the following result.

\begin{corollary}
	\label{lemma_add2}
	Let $f \colon \R^k_+ \to \R_+$ be a continuous function satisfying $f(x,x,\dots,x)>0$ for $x>0$. Then for any  $K\in \{ f(x,x,\dots,x): x >0\}$ there exist $c_K\in (0,1)$ and $T_K\geq 0$ such that $K$ is a fixed point of $g(\textbf{x})=c_K T_K+(1-c_K)f(\textbf{x})$.
\end{corollary}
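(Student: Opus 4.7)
The plan is to reduce the multivariate statement to the scalar case covered by Lemma~\ref{lemma_add2a} by restricting the map $f$ to the diagonal. Concretely, I would define the auxiliary scalar map
\[
f_1\colon \R_+\to \R_+, \qquad f_1(x):=f(x,x,\dots,x).
\]
By hypothesis, $f_1$ is continuous (as a composition of continuous maps) and $f_1(x)>0$ for $x>0$, so the assumptions of Lemma~\ref{lemma_add2a} hold. Moreover, the range of $f_1$ is precisely the set $\{f(x,x,\dots,x) : x > 0\}$ from which $K$ is chosen.

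Next, given such a $K$, I would invoke Lemma~\ref{lemma_add2a} to obtain $c_K\in(0,1)$ and $T_K\ge 0$ such that $K$ is a fixed point of the scalar map $x\mapsto c_K T_K + (1-c_K)f_1(x)$, i.e. $K = c_K T_K + (1-c_K)f_1(K)$. The final step is then the trivial observation that a fixed point of $f$ in the multivariate sense at the diagonal point $(K,\dots,K)$ is the same as a fixed point of $f_1$ at $K$: evaluating $g$ on the diagonal gives
\[
g(K,K,\dots,K) = c_K T_K + (1-c_K) f(K,K,\dots,K) = c_K T_K + (1-c_K) f_1(K) = K,
\]
so $K$ is a fixed point of $g$ in the sense meant by the statement.

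There is no real obstacle here; the only thing to be slightly careful about is matching the interpretation of "fixed point" for the $k$-variate map $g$ with the scalar fixed point property delivered by Lemma~\ref{lemma_add2a}, which is why the author writes "immediately implies." The plan above makes the reduction explicit in one short paragraph.
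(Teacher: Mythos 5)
Your proposal is correct and is exactly the paper's argument: the authors' entire proof is ``Apply Lemma~\ref{lemma_add2a} with $f_1(x)=f(x,x,\dots,x)$,'' and your write-up simply makes the same diagonal reduction explicit. Nothing further is needed.
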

\begin{proof} 
Apply Lemma~\ref{lemma_add2a} with $f_1(x)=f(x,x,\dots,x)$. \end{proof}

Let us demonstrate that for suitable functions any point can be 
stabilized with a combination of HMTOCs.

\begin{thm}
\label{theorem_add_1}

Let $f\colon \R^k_+ \to \R_+$ be a continuous differentiable function with $f(x,x,\dots,x)>0$ for $x>0$. 
Then for any $K_1\in \{ f(x,x,\dots,x): x >0\}$ there exists a combination of two HMTOCs for which 
$K_1$ is an asymptotically stable fixed point.

Additionally, let either  $f\colon \R^k_+ \to \R_+$ be globally Lipschitz continuous, or let~\eqref{Lipschitz_2} 
hold for a fixed point $K$, or $f$ be a globally bounded function. 
Then for any $K_1\in \{ f(x,x,\dots,x): x >0\}$ there exists a combination of two HMTOCs for which 
$K_1$ is a globally asymptotically stable fixed point.
\end{thm}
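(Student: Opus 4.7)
The plan is to construct two HMTOCs in sequence. Since $K_1 \in \{f(x,\dots,x) : x > 0\}$, Corollary~\ref{lemma_add2} yields $c_K \in (0,1)$ and $T_K \ge 0$ for which $K_1$ is a fixed point of $g(\mathbf{x}) := c_K T_K + (1-c_K) f(\mathbf{x})$. Then I apply a second HMTOC with target $T = K_1$ and intensity $c$: because $g(\mathbf{K}_1) = K_1$, the point $K_1$ remains a fixed point of the doubly-controlled equation, and by Lemma~\ref{lemma_add1} the composition is itself a single HMTOC. The remaining task is therefore to choose $c$ so that $K_1$ is (globally) asymptotically stable for the combined system
\[
u_{n+1} = c K_1 + (1-c)\, g(u_n, u_{n-1},\dots, u_{n-k+1}).
\]

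For the local statement, $g$ inherits continuous differentiability from $f$ with $\frac{\partial g}{\partial x_j}(\mathbf{K}_1) = (1-c_K)\frac{\partial f}{\partial x_j}(\mathbf{K}_1)$, so Theorem~\ref{p_esta_sufficient} applied with $g$ in place of $f$ and with target $K_1$ gives asymptotic stability of $K_1$ whenever
\[
c > 1 - \left[(1-c_K)\sum_{j=1}^{k}\left|\frac{\partial f}{\partial x_j}(\mathbf{K}_1)\right|\right]^{-1}
\]
(and for any $c \in (0,1)$ if the sum vanishes), which is always achievable by taking $c$ close to $1$.

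For the global statement, in each of the three additional hypotheses I would verify that $g$ satisfies condition~\eqref{Lipschitz_2} at $K_1$ for some $L'>0$, and then conclude by applying Theorem~\ref{theorem_add3} to $g$ with target $K_1$. Using $K_1 = c_K T_K + (1-c_K) f(\mathbf{K}_1)$, the key identity is
\[
|g(\mathbf{x}) - K_1| = (1-c_K)\,|f(\mathbf{x}) - f(\mathbf{K}_1)|.
\]
If $f$ is globally Lipschitz with constant $L_f$, then $L' = (1-c_K) L_f$ works immediately; if $f$ is globally bounded and continuously differentiable, then so is $g$, with $K_1$ as a fixed point, and the unlabeled lemma at the end of Section~3 supplies $L'$ directly. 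The main obstacle is the intermediate case, in which \eqref{Lipschitz_2} is known only at an original fixed point $K$ of $f$ possibly distinct from $K_1$: the triangle inequality only gives
\[
|f(\mathbf{x}) - f(\mathbf{K}_1)| \le L\,\|\mathbf{x} - \mathbf{K}_1\| + 2L\,|K - K_1|,
\]
and the parasitic additive constant must be absorbed. I expect to handle this by first confining the dynamics of the combined HMTOC to a positively invariant bounded set, on which the extra constant can be folded into an enlarged effective Lipschitz constant $L'$, and then invoking Theorem~\ref{theorem_add3} on that set.
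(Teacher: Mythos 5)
Your architecture coincides with the paper's: reduce via Corollary~\ref{lemma_add2} to a map $g$ having $K_1$ as a fixed point, note by Lemma~\ref{lemma_add1} that the two controls compose to a single HMTOC, and then invoke a local stability theorem (the paper uses Theorem~\ref{p_esta}; your use of Theorem~\ref{p_esta_sufficient} applied to $g$ with target $K_1$ is equally valid and even yields an explicit threshold) and Theorem~\ref{theorem_add3} for the global claim. The globally Lipschitz case and the globally bounded case are handled correctly and essentially as in the paper.

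The gap is in the intermediate case, where \eqref{Lipschitz_2} is assumed only at a fixed point $K$ of $f$ possibly different from $K_1$. You correctly isolate the obstacle --- the estimate $|f(\mathbf{x})-f(\mathbf{K_1})|\le L\|\mathbf{x}-\mathbf{K_1}\|+2L|K-K_1|$ carries a parasitic additive constant --- but your proposed remedy does not work as described. Confining the orbit to a bounded positively invariant set cannot let you ``fold'' an additive constant into a multiplicative one: near $\mathbf{K_1}$ one has $\|\mathbf{x}-\mathbf{K_1}\|\to 0$ while the constant $2L|K-K_1|$ does not, so no finite $L'$ satisfies $|g(\mathbf{x})-K_1|\le L'\|\mathbf{x}-\mathbf{K_1}\|$ on any neighbourhood of $\mathbf{K_1}$ by that route; moreover Theorem~\ref{theorem_add3} as stated requires \eqref{Lipschitz_2} on all of $\R^k_+$. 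The paper's fix is the opposite dichotomy: \emph{near} $\mathbf{K_1}$, i.e.\ for $\|\mathbf{x}-\mathbf{K_1}\|\le\max\{K,K_1\}$, use the local Lipschitz continuity of $f$ (available since $f$ is $C^1$) to get a constant $L_2$ with no additive term; \emph{far} from $\mathbf{K_1}$, i.e.\ for $\|\mathbf{x}-\mathbf{K_1}\|\ge\max\{K,K_1\}>0$ (recall $K_1>0$), the quantity $\|\mathbf{x}-\mathbf{K_1}\|$ is bounded below, so the additive term $2L(\|\mathbf{K}\|+\|\mathbf{K_1}\|)$ is dominated by a fixed multiple of $\|\mathbf{x}-\mathbf{K_1}\|$ (the paper bounds it by $4L\|\mathbf{x}-\mathbf{K_1}\|$). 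Combining the two regions gives \eqref{Lipschitz_2} for $g$ at $K_1$ with $L_1=(1-c_{K_1})\max\{L_2,5L\}$ on all of $\R^k_+$, after which Theorem~\ref{theorem_add3} applies directly. With this substitution your argument is complete.
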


\begin{proof} 
By Corollary~\ref{lemma_add2} there exists $c_{K_1}\in (0,1)$ and $T_{K_1}\geq 0$ such that $K_1$ is a fixed point of 
$g(\textbf{x})=c_{K_1} T_{K_1}+(1-c_{K_1})f(\textbf{x})$. 

Therefore, we can apply Theorem~\ref{p_esta} 
to guarantee that there exists $c^*\in (0,1)$ such that for $c\in (c^*,1)$ the combination of HMTOCs 
\begin{equation}
\label{comp}
u_{n+1}=c K_1 + (1-c)[c_{K_1} T_{K_1}+(1-c_{K_1})f( u_n,u_{n-1}, \dots, u_{n-k+1} )]
\end{equation}
has $K_1$ as an asymptotically stable fixed point.

Additionally, let either  $f\colon \R^k_+ \to \R_+$ be globally Lipschitz continuous, or condition~\eqref{Lipschitz_2} hold for a fixed point $K$,
or $f$ be a globally bounded function. We note that the conditions of the theorem imply that $f$ is locally Lipschitz. From now on, we write $\mathbf{K}=(K,K, \dots, K)$ and $\mathbf{K_1}=(K_1,K_1, \dots, K_1)$.
Let us assume that after the first HMTOC, the function 
$g({\bf x})=c_{K_1} T_{K_1}+(1-c_{K_1}) f({\bf x})$ satisfies $g(\mathbf{K_1}) = K_1$, therefore 
$K_1 - c_{K_1} T_{K_1} =(1-c_{K_1}) f(\mathbf{K_1})$. Thus
\begin{align*}
\left| g({\bf x}) - K_1 \right|  &= \left| c_{K_1} T_{K_1}+(1-c_{K_1}) f({\bf x})-K_1 \right| \\
&=\left|  (1-c_{K_1})[f({\bf x})-f(\mathbf{K_1})] \right| \\
& \leq L (1-c_{K_1}) \| {\bf x}-\mathbf{K_1} \|, 
~{\bf x} \in \R^k_+,
\end{align*}
if $f$ is globally Lipschitz with the constant $L$. In consequence, inequality~\eqref{Lipschitz_2} holds for $g$ and $K_1$.

Next, consider the case when $f$ satisfies~\eqref{Lipschitz_2} for a fixed point $K$. By the local Lipschitz 
condition,  it is possible to choose $L_2>0$ such that 
\begin{align}
\label{local}
\left| g({\bf x}) - K_1 \right| &= (1-c_{K_1}) \left| f({\bf x})-f(\mathbf{K_1}) \right| \notag \\
 & \leq (1-c_{K_1}) L_2  \| {\bf x}-\mathbf{K_1} \| \quad \mbox{for} \quad  \| {\bf x}-\mathbf{K_1}\| 
\leq \max\{K,K_1\},
~{\bf x} \in \R^k_+.
\end{align}
Moreover, for any ${\bf x} \in \R^k_+$ such that $\| {\bf x}-\mathbf{K_1}\| \geq \max\{K,K_1\}$, we have 
$\| {\bf x}-\mathbf{K_1} \| \geq K_1=\| \mathbf{K_1} \|$ and
$\| {\bf x}-\mathbf{K_1} \| \geq K=\| \mathbf{K} \|$. Hence
\begin{align*}
\left| g({\bf x}) - K_1 \right| 
&= (1-c_{K_1})\left| f({\bf x})-f(\mathbf{K_1}) \right|\\ 
& \leq (1-c_{K_1})\left| f({\bf x})-K \right|+ (1-c_{K_1})\left| f(\mathbf{K_1})-K \right|\\
&\leq L (1-c_{K_1}) \| {\bf x}-\mathbf{K} \| + L (1-c_{K_1})\| \mathbf{K_1}- \mathbf{K}  \| \\
 & = L (1-c_{K_1})\| {\bf x}-\mathbf{K_1}+\mathbf{K_1}-\mathbf{K} \| + L (1-c_{K_1}) 
\left[ \|  \mathbf{K}\| + \|\mathbf{K_1} \| \right] \\
& \leq L (1-c_{K_1})\| {\bf x}-\mathbf{K_1} \| + 2 L (1-c_{K_1}) \left[ \|  \mathbf{K}\| + \|\mathbf{K_1} \| \right]\\
& \leq L (1-c_{K_1}) \| {\bf x}-\mathbf{K_1} \| + 4 L (1-c_{K_1})  \| {\bf x}-\mathbf{K_1} \|. 
\end{align*}
Thus 
\[
\left| g({\bf x})-K_1 \right| \leq L_1 \| {\bf x}-\mathbf{K_1} \|,
~{\bf x} \in \R^k_+
\]
where $L_1=(1-c_{K_1})\max\{L_2, 5L\}$, and $L_2$, $L$ appear in \eqref{local} and \eqref{Lipschitz_2}, respectively.

Finally, a similar argument proves that inequality~\eqref{Lipschitz_2} holds for $g$ and $K_1$ when $f$ is globally bounded.

Further, Theorem~\ref{theorem_add3} guarantees that there exists $c^*\in (0,1)$ such that for $c\in (c^*,1)$ the combination of HMTOCs has $K_1$ as a globally asymptotically stable fixed point.  
\end{proof}

\section{Examples and numerical simulations}

In this section, three examples  
illustrate how the previous results can be used to determine a control intensity 
sufficient to stabilize a steady state. 

\subsection{Stabilization of a nontrivial fixed point}
Let us consider the second order equation 
\begin{equation}
\label{ecexp}
u_{n+1}=\exp(1-u_n) \exp(1 -u^2_{n-1}),
\end{equation}
which has a stable 3-cycle and an unstable fixed point $K=1$. 
Assume that we are interested in stabilizing such a fixed point. To estimate the control intensity for that goal, we begin by 
calculating the partial derivatives of the map $f(x,y)= \exp(1-x) \exp(1 -y^2)$, 
\[
  \frac{\partial f}{\partial x} (x,y) = -\exp(1-x) \exp(1 -y^2),  \qquad \frac{\partial f}{\partial y} (x,y)= -2 y \exp(1-x) \exp(1 -y^2). 
\]
Evaluating them at $\mathbf{K}=(1,1)$ and using Theorem~\ref{p_esta_k2},  $K=1$ is a locally stable fixed point for the controlled equation HMTOC with 
the target $T=1$ if $c\in (c^*,1)$ with 
\[
c^*= 1-\frac{1}{\max\{|-2|,|-1|-2\}}= 1-\frac{1}{2}=0.5.
\]   

Moreover, the partial derivatives of $f$ satisfy 
\[
\left | \frac{\partial f}{\partial x} (x,y) \right | = |-\exp(1-x) \exp(1 -y^2) |\le e^2 \approx 7.39, \quad (x,y)\in \mathbb R^2_+,
\]
and
\[ 
\left  |\frac{\partial f}{\partial y} (x,y)\right |= |-2 y \exp(1-x) \exp(1 -y^2) |\le 2 e \frac{\sqrt{e}}{\sqrt{2}} \approx 6.34, \quad (x,y)\in \mathbb R^2_+,
\] 
where we have used that, in $\mathbb R_+$, the function $\exp(1-x)$ is decreasing and the function $x\exp(1-x^2)$ is bounded by $\sqrt{e}/\sqrt{2}$. 

In consequence, by the mean value theorem in several variables and the Cauchy-Schwarz inequality, map $f$ is globally Lipschitz continuous with constant $L=2 e^2 \approx 14.78$ and  condition~\eqref{Lipschitz_2} holds for the same $L$. Using Theorem~\ref{theorem_add3} we obtain that the fixed point $K=1$ is a globally asymptotically stable fixed point of the controlled equation HMTOC for at least any $c$ greater than $1-\frac{1}{2e^2}\approx 0.93$.   


\begin{figure}[h]
	\centering
	\includegraphics[width=0.75 \linewidth]{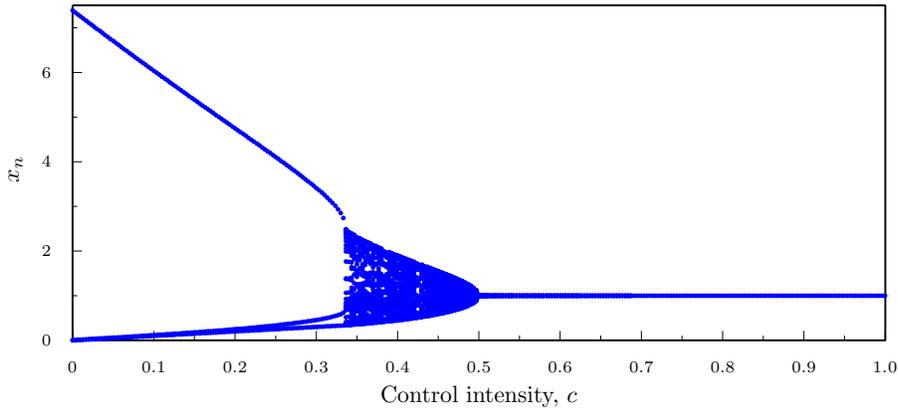}
	\caption{Stabilizing effect of HMTOC on the uncontrolled equation $u_{n+1}=\exp(1-u_n) \exp(1 -u^2_{n-1})$. Target was chosen as $T=1$, which is a fixed point of the uncontrolled system. For each $c\in k/300$, $k=1,\dots,300$, we plotted 50 consecutive values of $u_n$ after discarding the first 3000. Initial conditions were chosen pseudo-randomly.}
	\label{fig:FiguraBifBounded}
\end{figure}

Figure~\ref{fig:FiguraBifBounded} shows the effect of increasing the control intensity $c$ from $0$ to $1$ in equation~\eqref{ecexp}. We observe that the interval of control intensities guaranteeing local stability of the fixed point is sharp. However, numerically it seems that the interval of control intensities guaranteeing the global attraction derived from Theorem~\ref{theorem_add3} could be improved. 
We emphasize that such an improvement could be obtained directly from Theorem~\ref{theorem_add3}, if we show that condition~\eqref{Lipschitz_2} holds for a smaller $L$. 
Next example illustrates that using the best constant $L$ in condition~\eqref{Lipschitz_2} can give sharp results for global attraction.   

\subsection{Pest eradication. Stabilization of the trivial fixed point.}

Let us consider the delayed Ricker equation
$u_{n+1}=u_n \exp(1.5 -u_{n-1})$, which has been proposed as a model for populations with non-overlapping generations and a one generation time lag in the intraspecific competition for resources \cite{levin1976note}. Suppose that we want to stabilize the fixed point $K=0$. For example, this could be the case if the population is a pest and we are interested in eradicating it. We choose the target $T=0$ in HMTOC, which biologically corresponds to harvesting a constant proportion of the population after reproduction. 

Then, analogously to the previous example, Theorem~\ref{p_esta_k2} implies that $K=0$ is a locally stable fixed point for the controlled equation \eqref{MPF_eq_high} if $c\in (c^*,1)$ with 
\[
c^*=1-\frac{1}{\max\{|0|,|e^{1.5}|+0\}}=1-\frac{1}{e^{1.5}}\approx 0.78.
\]   
Or in other words, that any constant harvesting effort greater than $c^*$ is able to control the pest if the initial population is small enough.       

In order to calculate the harvesting effort to control the pest independently of the initial population size, we note that condition~\eqref{Lipschitz_2} holds with $L= e^{1.5}$ and $K=0$ because
\[
|x \exp(1.5-y)|\le e^{1.5} |x|\le  e^{1.5} \max\{|x|,|y|\}, \quad  (x,y)\in \mathbb R^2_+.
\] 
Thus, Corollary~\ref{corollary_add3} implies that $K=0$ is indeed a globally asymptotically stable fixed point for $c$ greater than $1-\frac{1}{e^{1.5}}\approx 0.78$. Figure~\ref{fig:FiguraBif} illustrates this example. Note how only for $c$ greater than $0.78$ the trivial fixed point is stabilized. Therefore, the estimates obtained from Theorem~\ref{p_esta_k2} and Corollary~\ref{corollary_add3} cannot be improved in this case.

\begin{figure}[h!]
\centering
\includegraphics[width=0.75 \linewidth]{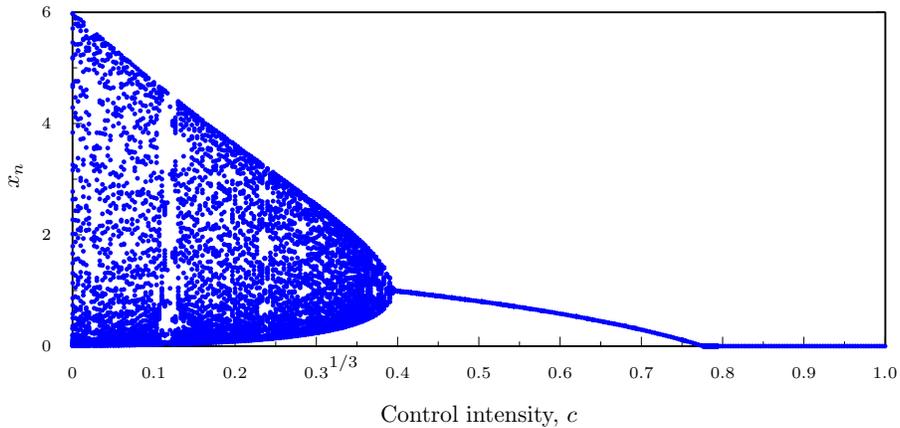}
\caption{Stabilizing effect of increasing the control parameter $c$ in the controlled \eqref{MPF_eq_high}. The uncontrolled equation is $u_{n+1}=u_n \exp(1.5 -u_{n-1})$. For each $c\in k/300$, $k=1,\dots,300$, we plotted 50 consecutive values of $u_n$ after discarding the first 3000. Initial conditions were chosen pseudo-randomly. }
\label{fig:FiguraBif}
\end{figure}

\subsection{Targeting. Stabilization of an arbitrary value.}
The aim of this example is to show that using a combination of HMTOCs is very flexible from the point of view of targeting, that is, a suitable combination of HMTOCs allows us to carry the system into a desired objective. Let us consider the third order delayed Pielou equation
\[
u_{n+1}=\frac{8 u_n}{1+u_{n-2}},
\]
which was proposed as a discrete analogue of the  logistic differential equation with delay \cite{Pielou1}. 

To illustrate that a combination of two HMTOCs can stabilize any point $K \in \{ \frac{8 x}{1+x}: x >0\}=[0,8)$, let 
us choose, for example, $K=6$, which is not a fixed point of the uncontrolled equation. 
It is easy to verify that taking $c_K=2/9$ and $T_K=3$ solves the equation
\[
K= c_K T_K+ (1-c_K) \frac{8\cdot K}{1+K}.
\]

Therefore, by Theorem~\ref{theorem_add_1} we know that point $K=6$ is stabilized when using alternatively  HMTOC with $c=c_K, T=T_K$ and with $T=K$ and $c\in (0,1)$ if $c$ is large enough in latter.  Figure~\ref{fig:FiguraPielou} numerically illustrates it.  

\begin{figure}[h]
	\centering
	\includegraphics[width=0.75 \linewidth]{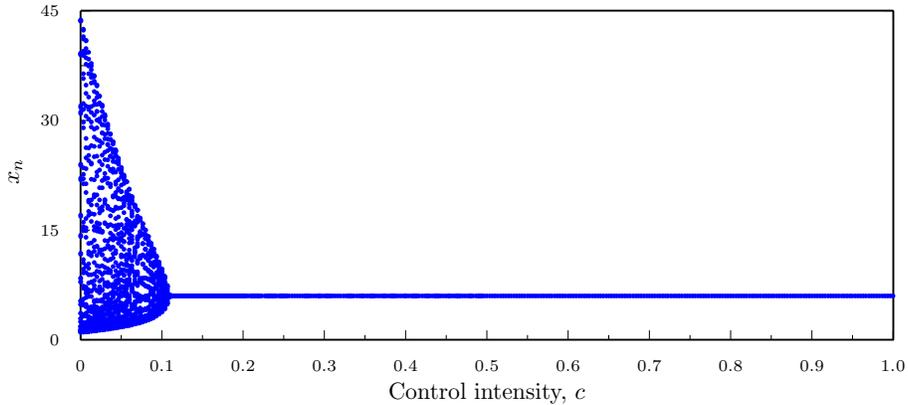}
	\caption{Stabilizing effect of a combination of two HMTOCs for the third order Pielou equation $u_{n+1}=\frac{8 u_n}{1+u_{n-2}}$. The combination was chosen to guarantee the stabilization of $K=6$. For each $c\in k/300$, $k=1,\dots,300$, we plotted 50 consecutive values of $u_n$ after discarding the first 3000. Initial conditions were chosen pseudo-randomly.}
		\label{fig:FiguraPielou}
	\end{figure}

\section{Summary}

Considering stabilization of higher order equations by a natural generalization of target oriented control \cite{Dattani}, we have obtained:
\begin{enumerate}
\item 
sufficient local stabilization results; 
\item 
sufficient global stabilization conditions;
\item 
estimates of the control intensity necessary to achieve stabilization.  
\end{enumerate}

In the second order case, the estimate of the minimum control intensity needed to locally stabilize a fixed point 
given in Theorem~\ref{p_esta_k2} is sharp as the first and second examples illustrate. Moreover, the second example shows that, at least in some cases, Corollary~\ref{corollary_add3} is optimal. On the other hand, the numerical simulations suggest that our main global stability result (Theorem~\ref{theorem_add3}) 
could be improved.  
As remarked, one way to attain this improvement will be to obtain 
the smallest constant $L$ satisfying condition~(3.1). 
Another one could be to study a restricted family of maps, as in \cite{TPC} where the results hold for unimodal maps with a negative Schwarzian 
derivative. But probably this improvement will be difficult. This expected difficulty should not be surprising if we recall that the formulated in 1976 conjecture about local stability implying global stability for the delayed Ricker equation \cite{levin1976note} was only recently solved for the second order case using a computer aided proof \cite{{bartha2013local}}.

Most of stabilization results in the present paper were developed in the case when 
a fixed point of the original difference equation is stabilized.
Later on, we describe how the target oriented control can shift 
a fixed point to any prescribed value. In this context,
the scheme how all the previous stabilization results can be applied is the following: 
\begin{itemize}
\item
We define $c_1$ and $T_1$ so that the prescribed value $x^{\ast}$ is a fixed point of the equation
$g(\textbf{x})=c_{1} T_{1}+(1-c_{1})f(\textbf{x})$.
\item
For the new higher order equation $\textbf{x}_{n+1}=g(\textbf{x}_n)$, the chosen $x^{\ast}$ is a fixed point,
so any of previous results on either local or global stabilization apply, with $T_2=x^{\ast}$.
In particular, these results allow to find bounds for $c_2 \in (c^*,1)$ leading to stabilization. The process follows the proof of Theorem~\ref{theorem_add_1}.  
\item
By Lemma~\ref{lemma_add1}, we combine two successive HMTOCs getting
$c_3=c_1+c_2(1-c_1)$, where $c_1$ is fixed, $c_2 \in (c^*,1)$, so $c_3 \in (c_1+c^*(1-c_1), 1)$,
and $T_3=(c_1T_1+c_2 x^{\ast}(1-c_1))/c_3$ is uniquely defined once $c_3$ is chosen.
Thus, we know the resulting $c_3$ and $T_3$ required for stabilization of $x^{\ast}$.
\end{itemize}

In population dynamics, when the population has age or spatial structure, 
this leads to systems of difference equations. In some relevant cases the 
system can be rewritten as a delayed equation (see, for example, \cite{liz2012global}, where a juvenile-adult population is considered) to which HMTOC 
could be applied.  
The method presented in this paper does not apply in the general system setting. 
However, the extension to systems should be straightforward from the results given 
here and will be discussed elsewhere.

\centerline{\bf Acknowledgment}

The authors would like to thank the editor and four anonymous referees for their constructive comments.  
The first author was partially supported by NSERC, grant RGPIN/261351-2010. 
The second author was supported by the Spanish Ministerio de Econom\'{\i}a 
y Competitividad and FEDER, grant MTM2013-43404-P.
Part of this work was accomplished while D. Franco was visiting Isaac Newton Institute 
for Mathematical Sciences in Cambridge; he thanks the institute for support and hospitality.

\appendix

\section{}

\noindent \emph{Proof of Lemma~\ref{lemma_exist}.}
To prove claim (i) we note that fixed points of HMTOC are solutions of
\begin{equation}
\label{eq_fp}
x = g(x) := c T + (1-c) f(x,\dots,x),
\end{equation}
where $g$ is a continuous function.
We have 
\[ g(0)=c T + (1-c) f(0,\dots,0) \ge c T > 0,\]
and
\[
g(M)= c T + (1-c) f(M,\dots,M) \le c T + (1-c)M \le M.
\]
Thus, equation \eqref{eq_fp} has at least one positive solution $x \in (0,T)$  by the intermediate value theorem.  

Claim (ii) follows from claim (i) by taking $M=\max\{T,\overline{M} \}$.   \qed\\

\noindent \emph{Proof of Theorem~\ref{p_esta}}.
It is well known that a fixed point  $P_c \in I$ of HMTOC is asymptotically stable if $\{z\in \mathbb{C} : |z|<1\}$ contains all the roots of the characteristic polynomial	
\begin{equation*}
	p_{P_c}(x ):= x^k - \frac{\partial f}{\partial x_1 }(\mathbf{P}_c) \ x^{k-1} - \dots -  \frac{\partial f}{\partial x_k }(\mathbf{P}_c)	
\end{equation*}
where $\mathbf{P}_c=(P_c,\dots,P_c)$; see \cite[Section 1.2]{camouzis2007dynamics} for more details. 
	
Applying Fujiwara's upper bound (see for instance \cite[Remark 8]{deutsch1970matricial}) to polynomial  $p_{P_c}$,  we obtain that any root $\lambda$ of $p_{P_c}$ satisfies
\[
|\lambda | \le 2 (1-c) \max_{i=1,\dots,k} \left | \frac{\partial f}{\partial x_i }(\mathbf{P}_c) \right |^{\frac{1}{i}}.
\]
	
Next, since the partial derivatives of $f$ are bounded on the rectangle $I^k$, there exists a positive real number $A$ such that 
	\begin{equation}
	\label{boundc}
	A=\max_{P_c \in I} \left \{ \max_{i=1,\dots,k} \left | \frac{\partial f}{\partial x_i }(\mathbf{P}_c) \right |^{\frac{1}{i}} \right \}.	\end{equation} 
	  Therefore,  taking 
	\begin{equation*}
	c^* = \max\left\{ 0,1-\frac{1}{2 A} \right\},
	\end{equation*}
	we obtain that all the roots $\lambda$ of the polynomial $p_{P_c}$ satisfy $|\lambda | <1$ for $c\in (c^*,1)$. \qed\\

\noindent \emph{Proof of Theorem~\ref{p_esta_k2}.}
 Following the reasoning of Theorem~\ref{p_esta}, we have to guarantee that all the roots of the characteristic 
polynomial
\[
x^2-(1-c)\frac{\partial f}{\partial x}(\mathbf{K}) x - (1-c)\frac{\partial f}{\partial y}(\mathbf{K}),
\] 
are smaller than one in modulus. Applying the Jury conditions \cite{hinrichsen2005mathematical,jury1971inners}, we obtain that a necessary and sufficient condition for that to happen is 
\[
2>1-(1-c) \frac{\partial f}{\partial y}(\mathbf{K}) > (1-c)\left |\frac{\partial f}{\partial x}(\mathbf{K}) \right |,
\]
which is equivalent to 
\[ 
\left |\frac{\partial f}{\partial y}(\mathbf{K})\right |< \frac{1}{1-c} \qquad \mbox{ and } \qquad \left  |\frac{\partial f}{\partial x}(\mathbf{K}) \right |+\frac{\partial f}{\partial y}(\mathbf{K})< \frac{1}{1-c},
\]
from which we obtain the desired result. \qed\\

\noindent \emph{Proof of Theorem~\ref{p_esta_sufficient}}.
The result follows from \cite[Theorem 1.2.5]{camouzis2007dynamics}, after noting that the conditions of the theorem imply that 
\begin{equation*}
\label{c_bound}
\sum_{j=1}^k (1-c) \left| \frac{\partial f}{\partial x_j}(\mathbf{K}) \right| <1.
\end{equation*}
 \qed

\end{document}